\author{Valery Isaev}
\title{On fibrant objects in model categories}
\newcommand{\newref}[4][]{
\ifthenelse{\equal{#1}{}}{\newtheorem{h#2}[hthm]{#4}}{\newtheorem{h#2}{#4}[#1]}
\expandafter\newcommand\csname r#2\endcsname[1]{#3~\ref{#2:##1}}
\expandafter\newcommand\csname R#2\endcsname[1]{#4~\ref{#2:##1}}
\expandafter\newcommand\csname n#2\endcsname[1]{\ref{#2:##1}}
\newenvironmentx{#2}[2][1=,2=]{
\ifthenelse{\equal{##2}{}}{\begin{h#2}}{\begin{h#2}[##2]}
\ifthenelse{\equal{##1}{}}{}{\label{#2:##1}}
}{\end{h#2}}
}
\theoremstyle{definition}
\newcommand{\we}{\mathcal{W}}
\newcommand{\fib}{\mathcal{F}}
\newcommand{\cof}{\mathcal{C}}
\newcommand{\cat}[1]{\mathbf{#1}}
\newcommand{\C}{\cat{C}}
\newcommand{\I}{\mathrm{I}}
\newcommand{\J}{\mathrm{J}}
\newcommand{\class}[2]{#1\text{-}\mathrm{#2}}
\newcommand{\Iinj}[1][\I]{\class{#1}{inj}}
\newcommand{\Icell}[1][\I]{\class{#1}{cell}}
\newcommand{\Icof}[1][\I]{\class{#1}{cof}}
\newcommand{\Jinj}[1][]{\Iinj[\J#1]}
\newcommand{\Jcell}[1][]{\Icell[\J#1]}
\newcommand{\Jcof}[1][]{\Icof[\J#1]}
\newcommand{\cyli}{i}
\newcommand{\po}[1][dr]{\save*!/#1+1.2pc/#1:(1,-1)@^{|-}\restore}
\begin{document}

\begin{abstract}
In this paper, we study properties of maps between fibrant objects in model categories.
We give a characterization of weak equivalences between fibrant object.
If every object of a model category is fibrant, then we give a simple description of a set of generating cofibrations.
We show that to construct such model structure it is enough to check some relatively simple conditions.
\end{abstract}

\maketitle

\section{Introduction}

Model categories, introduced in \cite{quillen}, are an important tool in homotopy theory.
In this paper, we study properties of maps between fibrant objects.
We will show that weak equivalence between fibrant objects have a simple characterization in terms of generating cofibrations (\rprop{min-we}).
In particular, if every object of a model category is fibrant, then we get a complete characterization of weak equivalences.
Using this characterization, we will show that to construct a model structure in which all objects are fibrant,
it is enough to verify some (relatively) simple conditions (\rthm{main}, \rprop{main-path}, \rprop{main-cyl}).
Examples of applications of these constructions are model structures on strict $\omega$-categories (which was constructed in \cite{folk}) and topological spaces.
Examples of new model structures constructed by means of results of this paper will be given in \cite{alg-models}.

If we fix the class of cofibrations, then the class of weak equivalences which defines such model structure is unique if it exists.
These class of weak equivalences is the minimum possible among model structures with this class of cofibrations.
Thus a model category in which all objects are fibrant is left-determinant (as defined in \cite{left-det}).
But the converse is not true.
For example, the category of simplicial sets (as well as every Grothendieck topos \cite{cisinski})
carries a left-determinant model structure with monomorphisms as cofibrations,
but a model structure on this category in which all objects are fibrant and with this class of cofibrations does not exist.
To see this, consider a cylinder $C$ for the terminal object.
Then $C \amalg_{\Delta[0]} C$ does not have a map from $\Delta[1]$ which
maps faces of this simplex to points of $C$ which were not amalgamated.
Thus $C \amalg_{\Delta[0]} C$ cannot be fibrant.

Thus the class of model structures in which all objects are fibrant is much narrower than the class of left-determinant model structures.
On the other hand model categories in which all objects are fibrant have properties not shared by left-determinant ones.
Often a set of generating trivial cofibrations is defined using cardinality bounds on domains and codomains of maps,
which does not give us a useful description of generating trivial cofibrations.
But if all objects are fibrant, then we can give a simple explicit construction of a set of generating trivial cofibrations (see \rcor{min-cof-gen}).
Also, as we already noted, model categories in which all objects are fibrant have a simple characterization of weak equivalences in terms of generating cofibrations.

In section 2, we recall definitions and basic properties of weak factorization systems and model categories.
In section 3, we prove several results that are related to fibrant objects, fibrations and trivial cofibrations between them.
In section 4, we demonstrate a method of constructing model structures in which all objects are fibrant.

\section{Preliminaries}

In this section, we will introduce a few definitions that we will need later.
We will also recall the definition and basic properties of model categories and prove a slightly generalized versions of some standard lemmas.

\subsection{Homotopy relation}

Let $\C$ be a category and let $V$ be an object of $\C$.
Then we a \emph{cylinder object} for $V$ is an object $C(V)$ together with maps $\cyli_0,\cyli_1 : V \to C(V)$.
If $i : U \to V$ is a morphism of $\C$, then a \emph{relative cylinder object} for $\C$
is a cylinder object $(C_U(V),\cyli_0,\cyli_1)$ for $V$ such that $\cyli_0 \circ i = \cyli_1 \circ i$.
If $\C$ has the initial object $0$, then a relative cylinder object for $0 \to V$ is just a cylinder object for $V$.
A morphism of cylinder objects $C(V_1)$ and $C(V_2)$ is a pair of maps $f : V_1 \to V_2$ and $C(f) : C(V_1) \to C(V_2)$ which commute with $\cyli_0$ and $\cyli_1$.

A \emph{(left) homotopy} (with respect to $C(V)$) between maps $f,g : V \to X$ is a map $h : C(V) \to X$ such that $h \circ \cyli_0 = f$ and $h \circ \cyli_1 = g$.
Maps are \emph{homotopic} if there is a homotopy between them.
Maps are \emph{homotopic relative to $i : U \to V$} (with respect to $C_U(V)$) if there is a homotopy with respect to $C_U(V)$ between them.
Note that maps are homotopic relative to $i$ only if $i \circ f = i \circ g$.
If maps $f$ and $g$ are homotopic, then we write $f \sim g$, and if they are homotopic relative to $i$, then we write $f \sim_i g$.

Let $V,Y$ be objects of a category $\C$ and $R$ some relation on the set $\C(V,Y)$.
Given two morphisms $f : U \to V$ and $g : X \to Y$, we say that $f$ \emph{has the left lifting property (LLP) up to $R$} with respect to $g$,
and $g$ \emph{has the right lifting property (RLP) up to $R$} with respect to $f$ if for every commutative square of the form
\[ \xymatrix{ U \ar[r]^u \ar@{}[dr]|(.7){R} \ar[d]_f & X \ar[d]^g \\
              V \ar[r]_v \ar@{-->}[ur]^h             & Y,
            } \]
there is a dotted arrow $h : V \to X$ such that $h \circ f = u$ and $(g \circ h) R v$.
We say a map $f$ has the right lifting property up to $R$ with respect to
an object $V$ if it has this property with respect to the map $0 \to V$.
Given a morphism $f$ and a set of morphisms $\I$, $f$ has the left (right) lifting property up to $R$
with respect to $\I$ if it has this property with respect to all morphisms in $\I$.
A map has the right (left) lifting property if it has this property up to the equality relation.

Let $R$ be the maximal relation on the set $\C(V,Y)$, that is for every $f_1,f_2 : V \to Y$, we have $f_1\,R\,f_2$.
We will say that $g : X \to Y$ is \emph{pure} with respect to $f : U \to V$ if $g$ has RLP up to $R$ with respect to $f$.
The notion of pure morphism is (formally) similar to the concept of
$\lambda$-pure morphism, used in the theory of accessible categories.

We list a few elementary properties of pure morphisms in the following proposition:

\begin{prop} The following holds in every category $\C$:
\begin{enumerate}
\item If $g$ has RLP up to some relation with respect to $f$, then $g$ is pure with respect to $f$.
\item Pure maps are closed under composition.
\item If $f : X \to Y$ and $g : Y \to Z$ are maps such that $g \circ f$ is pure, then $f$ is also pure.
\item Every split monomorphism is pure with respect to all maps.
\item If a map is pure with respect to itself, then it is a split monomorphism.
\end{enumerate}
\end{prop}

Let $\C$ be a category and let $X$ be an object of $\C$.
A \emph{path object} for $X$ is an object $P(X)$ together with maps $p_0,p_1 : P(X) \to X$.
A \emph{(right) homotopy} between morphisms $f,g : V \to X$ is a map
$h : V \to P(X)$ such that $p_0 \circ h = f$ and $p_1 \circ h = g$.
We say that $f$ and $g$ are right homotopic and write $f \sim^r g$
if there exists a right homotopy $h : V \to P(X)$ between them.
A morphism of path objects $P(X)$ and $P(Y)$ is a pair of maps $f : X \to Y$ and $P(f) : P(X) \to P(Y)$ which commute with $p_0$ and $p_1$.

Note that $\sim$ is reflexive if and only if there exists a map $s : C_U(V) \to V$ such that $s \circ \cyli_0 = s \circ \cyli_1 = id_V$.
In this case we will say that $C_U(V)$ is \emph{reflexive}.
Relation $\sim^r$ is reflexive if and only if there exists a map $t : X \to P(X)$ such that $p_0 \circ t = p_1 \circ t = id_X$.
In this case we will say that $P(X)$ is \emph{reflexive}.
If $P(X)$ is reflexive, then we say that a right homotopy $h : V \to P(X)$ between $f,g : V \to X$
is \emph{constant on $i : U \to V$} if $h \circ i = t \circ f \circ i$.
In this case, we write $f \sim^r_i g$ and say that $f$ and $g$ are \emph{homotopic relative to $i$}.

\subsection{Model categories}

Model categories were introduced in \cite{quillen}.
For an introduction to the theory of model categories see \cite{hirschhorn,hovey}.

\begin{defn} A weak factorization system on a category $\C$ is a pair $(\mathcal{L},\mathcal{R})$
of full subcategories of the category $\C^\to$ of arrows of $\C$ such that
\begin{itemize}
\item Every morphism factors into a map in $\mathcal{L}$ followed by a map in $\mathcal{R}$.
\item $\mathcal{L}$ is the class of maps that have LLP with respect to $\mathcal{R}$.
\item $\mathcal{R}$ is the class of maps that have RLP with respect to $\mathcal{L}$.
\end{itemize}
\end{defn}

\begin{defn}
A \emph{model structure} on a category $\C$ is a choice of three classes of morphisms $\fib$, $\cof$, and $\we$,
called \emph{fibrations}, \emph{cofibrations}, and \emph{weak equivalences} respectively, satisfying the following axioms:
\begin{itemize}
\item $\we$ has 2-out-of-3 property, that is given composable morphisms $f,g$,
    if two out of three morphisms $f$, $g$, $g \circ f$ are in $\we$, so is the third.
\item $(\cof, \fib \cap \we)$ and $(\cof \cap \we, \fib)$ are weak factorization systems.
\end{itemize}
A \emph{model category} is a complete and cocomplete category equipped with a model structure.
\end{defn}

A map is called a \emph{trivial fibration} if it belongs to $\fib \cap \we$,
and it is called a \emph{trivial cofibration} if it belongs to $\cof \cap \we$.
An object $X$ is called \emph{fibrant} if the morphism $X \to 1$ to the terminal object is a fibration,
and it is called \emph{cofibrant} if the morphism $0 \to X$ from the initial object is a cofibration.

Let $\C$ be a model category.
Then we can define a relative cylinder object $C_U(V)$ for every $i : U \to V$.
Let $[\cyli_0,\cyli_1] : V \amalg_U V \to C(V)$, $s : C(V) \to V$ be a factorization of $[id_V,id_V] : V \amalg_U V \to V$ into a cofibration and a trivial fibration.
The homotopy relation corresponding to this cylinder object is always reflexive and symmetric.

Every object $X$ in a model category $\C$ has a reflexive path object which is defined as a factorization of the diagonal $\langle id_X, id_X \rangle : X \to X \times X$.
The following proposition is standard:
\begin{prop}[path-cyl]
If $i : U \to V$ is a cofibration and $X$ is a fibrant object, then maps $f,g : V \to X$ are left homotopic relative to $i$ if and only if they are right homotopic relative to $i$.
This homotopy relation is an equivalence relation.
\end{prop}

A map $f : X \to Y$ is \emph{an inclusion of a deformation retract} if there is
a map $g : Y \to X$ such that $g \circ f = id_X$ and $f \circ g \sim id_Y$.
A map $f : X \to Y$ is \emph{an inclusion of a strong deformation retract} if the homotopy is relative to $f$.

The following lemmas generalize standard properties of model categories.

\begin{lem}[hom-ext][Homotopy extension property]
Let $\C$ be a category, and let $i : U \to V$ be a morphism of $\C$.
Suppose that for an object $X$ of $\C$, there exists a path object $p_0,p_1 : P(X) \to X$ such that $p_0$ has RLP with respect to $i$.

Let $i : U \to V \in \cof$, $u : U \to X$, and $v : V \to X$ be maps, and let $h : U \to P(X)$ be a homotopy between $v \circ i$ and $u$.
Then there exists a map $v' : V \to X$ and a homotopy $h' : V \to P(X)$ between $v$ and $v'$ such that $h = h' \circ i$.
\end{lem}
\begin{proof}
Let $h : U \to P(X)$ be a homotopy between $v \circ i$ and $u$.
Consider the following diagram:
\[ \xymatrix{ U \ar[r]^-h \ar[d]_i & P(X) \ar[d]^{p_0} \\
              V \ar[r]_v & X
            } \]
By assumption, we have a lift $h' : V \to P(X)$ which is a required homotopy.
\end{proof}

This lemma can be illustrated as follows:
\[ \xymatrix{ U \ar[r]^u \ar[d]_i \ar@{}[dr]|(.3){\sim^r} & X \\
              V \ar[ur]_v &
            }
\qquad \qquad
   \xymatrix{ U \ar[r]^u \ar[d]_i \ar@{}[dr]|(.62){\sim^r} & X \\
              V \ar@{-->}[ur]^{v'} \ar@/_1pc/[ur]_v &
            } \]
If we have a diagram on the left, then we can find a map $v'$ such that the diagram on the right commutes.
Moreover, if we restrict the homotopy between $v$ and $v'$ on $U$, then we get the original homotopy between $v \circ i$  and $u$.
Let $\sim^{r*}$ be the reflexive transitive closure of $\sim^r$.
Then the previous lemma also holds for $\sim^{r*}$ in place of $\sim^r$:

\begin{lem}[hom-ext-rtc]
Let $\C$ be a category that satisfies conditions of the previous lemma.
Let $i : U \to V \in \cof$, $u : U \to X$, and $v : V \to X$ be maps, and let $h_1, \ldots h_n : U \to P(X)$ be a sequence of homotopies
such that $p_1 \circ h_j = p_0 \circ h_{j+1}$ for every $1 \leq j < n$, $p_0 \circ h_1 = v \circ i$ and $p_1 \circ h_n = u$.
Then there exists a map $v' : V \to X$ and a sequence of homotopies $h'_1, \ldots h'_n : V \to P(X)$
such that $p_1 \circ h'_j = p_0 \circ h'_{j+1}$ for every $1 \leq j < n$, $p_0 \circ h'_1 = v$ and $p_1 \circ h'_n = v'$ and $h_j = h'_j \circ i$ for every $1 \leq j \leq n$.
\end{lem}
\begin{proof}
Apply the previous lemma $n$ times.
\end{proof}

Let $\C$ be a category and let $\I$ be a class of morphisms of $\C$.
Then we define $\Iinj$ to be the class of morphisms of $\C$ that has RLP with respect to $\I$,
$\Icof$ to be the class of morphisms of $\C$ that has LLP with respect to $\Iinj$, and
$\Icell$ to be the class of transfinite compositions of pushouts of elements of $\I$.
Elements of $\Icell$ are called \emph{relative $\I$-cell complexes}.
Every relative $\I$-cell complex belongs to $\Icof$.

We say that a set $\I$ of maps of a cocomplete category $\C$ \emph{permits the small object argument}
if the domains of maps in $\I$ are small relative to $\Icell$.

\begin{prop}[][The small object argument]
Let $\C$ be a cocomplete category and $\I$ a set of maps of $\C$ that permits the small object argument.
Then $(\Icell,\Iinj)$ is a weak factorization system.
\end{prop}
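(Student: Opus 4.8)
The plan is to produce, for an arbitrary morphism $f : X \to Y$, a factorization $f = p \circ i$ with $i \in \Icell$ and $p \in \Iinj$ by the transfinite small object construction; once such factorizations are available, the remaining clauses of the definition of a weak factorization system follow formally from the inclusions $\I \subseteq \Icell \subseteq \Icof$ (the last recalled just above the statement) together with the retract argument.

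Using that $\I$ is a set and that, by hypothesis, each domain of a map in $\I$ is small relative to $\Icell$, I would first fix a regular cardinal $\kappa$ such that every such domain is $\kappa$-small relative to $\Icell$. Then I would build by transfinite recursion a chain $X = Z_0 \to Z_1 \to \dots \to Z_\alpha \to \dots$ $(\alpha < \kappa)$ with a compatible cocone $q_\alpha : Z_\alpha \to Y$, $q_0 = f$: at a limit ordinal $\lambda$ put $Z_\lambda = \mathrm{colim}_{\alpha < \lambda} Z_\alpha$ with $q_\lambda$ induced (using cocompleteness of $\C$), and at a successor $\alpha + 1$ let $S_\alpha$ be the set of commutative squares having left edge a map of $\I$, top edge into $Z_\alpha$, bottom edge into $Y$, and right edge $q_\alpha$ — a set, since the hom-sets of $\C$ are — form the coproduct $j_\alpha$ of the left edges of all squares in $S_\alpha$, and take $Z_{\alpha+1}$ to be the pushout of $j_\alpha$ along the canonical map to $Z_\alpha$ out of the coproduct of the top edges, with $q_{\alpha+1}$ induced by $q_\alpha$ and the bottom edges. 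Set $Z_\kappa = \mathrm{colim}_{\alpha < \kappa} Z_\alpha$ and let $i : X \to Z_\kappa$ be the canonical map, $p = q_\kappa : Z_\kappa \to Y$, so $p \circ i = f$. Then $i \in \Icell$: it is the transfinite composite of the maps $Z_\alpha \to Z_{\alpha + 1}$, each a pushout of $j_\alpha$; since a coproduct of maps of $\I$ is a transfinite composite of pushouts of those maps we have $j_\alpha \in \Icell$, and $\Icell$ is closed under pushouts and transfinite compositions.

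The heart of the proof is the verification that $p \in \Iinj$, and it is here that the smallness hypothesis is used. Given a lifting problem with $j : A \to B$ in $\I$, a map $a : A \to Z_\kappa$, and a map $b : B \to Y$ such that $p \circ a = b \circ j$, I would use that every transition map of the chain lies in $\Icell$ and that $A$ is $\kappa$-small relative to $\Icell$ to factor $a$ as $A \to Z_\alpha \to Z_\kappa$ for some $\alpha < \kappa$; compatibility of the cocone gives that $q_\alpha$ precomposed with $A \to Z_\alpha$ equals $p \circ a = b \circ j$, so the square with top edge $A \to Z_\alpha$, bottom edge $b$, left edge $j$, and right edge $q_\alpha$ belongs to $S_\alpha$, and the corresponding coproduct inclusion into the pushout $Z_{\alpha+1}$ yields a map $B \to Z_{\alpha+1}$; postcomposing with $Z_{\alpha+1} \to Z_\kappa$ gives the desired lift $B \to Z_\kappa$, whose two triangles commute by the universal property of the pushout and the definition of $q_{\alpha+1}$. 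Hence $p$ has the right lifting property against $\I$, that is, $p \in \Iinj$.

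Finally I would assemble the two lifting-property clauses. Since $\I \subseteq \Icell$, a map with RLP against $\Icell$ has RLP against $\I$ and hence lies in $\Iinj$; conversely $\Icell \subseteq \Icof$, and $\Icof$ is by definition the class of maps with LLP against $\Iinj$, so every map of $\Iinj$ has RLP against every map of $\Icell$ — thus $\Iinj$ is precisely the class of maps with RLP against $\Icell$. The inclusion $\Icell \subseteq \Icof$ is one half of the dual statement; for the other half, given $f$ with LLP against $\Iinj$, I would factor $f = p \circ i$ as above and solve the lifting problem with top edge $i$, bottom edge $\mathrm{id}_Y$, and right edge $p$, which exhibits $f$ as a retract of $i \in \Icell$, whence $f \in \Icell$ since $\Icell$ is closed under retracts. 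The step I expect to be the main obstacle is the bookkeeping in the construction: arranging $\kappa$ and the limit stage so that the colimit $Z_\kappa$ absorbs every lifting problem against $\I$, which is exactly what forces $p \in \Iinj$.
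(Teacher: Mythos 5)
The paper offers no proof of this proposition; it is recalled as a standard fact from the literature, so there is nothing internal to compare your argument against. Your transfinite construction of the factorization and the verification that $p \in \Iinj$ via smallness of the domains are the standard small object argument and are carried out correctly, including the reduction of the coproduct attachment to a transfinite composite of pushouts of single maps of $\I$ and the factorization of $a : A \to Z_\kappa$ through a stage $Z_\alpha$ with $\alpha < \kappa$.

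There is, however, a genuine error in your closing step: $\Icell$ is \emph{not} closed under retracts in general. A retract of a transfinite composite of pushouts of maps of $\I$ need not admit such a presentation itself (this already fails for relative cell complexes of topological spaces). Your retract argument correctly exhibits a map $f$ with LLP against $\Iinj$ as a retract of a map in $\Icell$, but the legitimate conclusion is $f \in \Icof$ --- that class is closed under retracts because it is defined by a lifting property --- not $f \in \Icell$. What the argument actually establishes is that $(\Icof, \Iinj)$ is a weak factorization system whose factorizations can be chosen with left part in $\Icell$; under the paper's definition of weak factorization system, which requires the left class to be \emph{exactly} the class of maps with LLP against the right class, the pair $(\Icell, \Iinj)$ qualifies only when $\Icell = \Icof$. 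The statement as printed is the customary loose phrasing of the small object argument; to repair your proof, replace the final sentence by the observation that $f$ is a retract of a map in $\Icell$ and hence lies in $\Icof$, and state the conclusion for $(\Icof, \Iinj)$.
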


\begin{defn}
Let $\C$ be a category with a model structure. Then this model category is
\emph{cofibrantly generated} if there are sets $\I$ and $\J$ of maps of $\C$ such
that they permit the small object argument, $\fib = \Jinj$, and $\fib \cap \we = \Iinj$.

Elements of $\I$ are called \emph{generating cofibrations},
and elements of $\J$ are called \emph{generating trivial cofibrations}.
\end{defn}

\begin{prop}[model-cat]
Suppose that $\C$ is a complete and cocomplete category, $\we$ is a class of morphisms of $\C$, and $\I$, $\J$ are sets of morphisms of $\C$.
Then $\C$ is a cofibrantly generated model category with $\I$ as the set of generating cofibrations,
$\J$ as the set of generating trivial cofibrations, and $\we$ as the class of weak equivalences if and only if the following conditions are satisfied:
\begin{description}
\item[(A1)] $\I$ and $\J$ permit the small object argument.
\item[(A2)] $\we$ has 2-out-of-3 property and is closed under retracts.
\item[(A3)] $\Iinj \subseteq \we$.
\item[(A4)] $\Jcell \subseteq \we \cap \Icof$.
\item[(A5)] Either $\Jinj \cap \we \subseteq \Iinj$ or $\Icof \cap \we \subseteq \Jcof$.
\end{description}
\end{prop}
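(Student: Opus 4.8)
The statement is a variant of Kan's recognition theorem for cofibrantly generated model categories, so the plan is to prove both implications along the standard lines, being careful about the precise hypotheses. For the \emph{only if} direction I would assume that $\C$ is a cofibrantly generated model category with $\I$, $\J$, $\we$ as in the statement and check (A1)--(A5) in turn. (A1) is built into the definition of ``cofibrantly generated''. Recording the standard identifications $\cof = \Icof$ (cofibrations are the maps with LLP against the trivial fibrations, and trivial fibrations $= \fib\cap\we = \Iinj$) and $\cof\cap\we = \Jcof$ (trivial cofibrations are the maps with LLP against fibrations $= \Jinj$), we get (A3) from $\Iinj = \fib\cap\we\subseteq\we$ and (A5), in its first alternative and even with equality, from $\Jinj\cap\we = \fib\cap\we = \Iinj$. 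For (A4), $\J\subseteq\cof\cap\we$, and since the left class of a weak factorization system is closed under pushouts and transfinite composition, $\Jcell\subseteq\cof\cap\we = \we\cap\Icof$. Finally (A2): the 2-out-of-3 property is an axiom, and closure of $\we$ under retracts is a standard consequence of the remaining axioms, which I would invoke (this is the only non-formal input on this side).

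The substance is the \emph{if} direction. Assuming (A1)--(A5), the small object argument applied via (A1) produces weak factorization systems $(\Icof,\Iinj)$ and $(\Jcof,\Jinj)$ --- first as $(\Icell,\Iinj)$ and $(\Jcell,\Jinj)$, then upgraded using $\Icell\subseteq\Icof$, $\Jcell\subseteq\Jcof$ and closure of the left classes under retracts. Put $\cof := \Icof$, $\fib := \Jinj$, keep $\we$. Together with the 2-out-of-3 property from (A2), the two model-category axioms and the property of being cofibrantly generated will follow once we establish
\[ \Iinj = \Jinj\cap\we \qquad\text{and}\qquad \Jcof = \Icof\cap\we. \]
The ``$\subseteq$'' halves are the easy ones: from (A4), $\Jcell\subseteq\Icof$, so every map of $\J$ has LLP against $\Iinj$ and hence $\Iinj\subseteq\Jinj$; combined with (A3) this gives $\Iinj\subseteq\Jinj\cap\we$, and it also gives $\Jcof\subseteq\Icof$, while $\Jcof\subseteq\we$ follows by a retract argument from $\Jcell\subseteq\we$ (in (A4)) and closure of $\we$ under retracts. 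For the reverse inclusions I would use (A5): whichever of its two alternatives is assumed supplies one of $\Jinj\cap\we\subseteq\Iinj$ or $\Icof\cap\we\subseteq\Jcof$ directly, and the other is then obtained by the usual factor-then-retract argument. For instance, if $\Icof\cap\we\subseteq\Jcof$ is given, then for $f\in\Jinj\cap\we$ factor $f = p\circ i$ with $i\in\Icell$, $p\in\Iinj$; by (A3) and 2-out-of-3, $i\in\we$, so $i\in\Icof\cap\we\subseteq\Jcof$, and lifting $i$ against $f$ exhibits $f$ as a retract of $p\in\Iinj$, whence $f\in\Iinj$; the case where the first alternative of (A5) is assumed is symmetric, factoring through $\Jcell$ and $\Jinj$ instead.

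I expect the main obstacle to be precisely this bootstrapping: organizing the retract arguments so that, no matter which half of the disjunction in (A5) one is handed, \emph{both} equalities $\Iinj = \Jinj\cap\we$ and $\Jcof = \Icof\cap\we$ come out, together with the two auxiliary ``$\subseteq$'' inclusions that feed the 2-out-of-3 steps. Everything else is routine weak-factorization-system bookkeeping. The one genuinely external ingredient is the closure of $\we$ under retracts: on the \emph{if} side this is part of hypothesis (A2), while on the \emph{only if} side it is the standard fact that weak equivalences in a model category are closed under retracts, which I would cite rather than reprove.
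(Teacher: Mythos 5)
Your proof is correct and is the standard argument for Kan's recognition theorem; the paper itself states this proposition without proof, as a known preliminary, so there is no competing argument in the text to compare against. You also correctly handle the one spot where the hypotheses are packaged slightly differently from the usual formulation: (A3) asks only for $\Iinj \subseteq \we$ rather than $\Iinj \subseteq \we \cap \Jinj$, and you recover the missing inclusion $\Iinj \subseteq \Jinj$ from (A4) via $\J \subseteq \Jcell \subseteq \Icof$, which is exactly what is needed to make the factor-then-retract arguments for both halves of (A5) go through.
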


\section{Properties of model categories}

In this section, we will prove various properties of model categories that are related to fibrant objects.
In particular, we will give a characterization of weak equivalences between fibrant objects.

\subsection{Generating trivial cofibrations}

First, let us show how to construct a set of generating trivial cofibrations from a set of generating cofibrations satisfying some mild hypotheses,
assuming that every object in a model category is fibrant.
Let $\C$ be a category and $\I$ a class of maps of $\C$ together with a relative cylinder object $C_U(V)$ for every $i : U \to V$ in $\I$.
Let us denote by $\J_\I$ the class of maps $\cyli_0 : V \to C_U(V)$ for each $U \to V \in \I$.

\begin{prop}[triv-fib-iinj]
Let $\C$ be a category, and let $\I$ be a class of maps of $\C$.
If $f : X \to Y \in \Jinj[_\I]$ has RLP up to $\sim_i$ with respect to every $i \in \I$, then $f \in \Iinj$.
\end{prop}
\begin{proof}
Suppose we have a commutative square as below.
We need to find a lift $V \to X$ such that both triangles commute.
\[ \xymatrix{ U \ar[d]_{i \in \I} \ar[r] \ar@{}[dr]|(.7){\sim} & X \ar[d]^f \\
              V \ar[r]_v \ar@{-->}[ur]^g                       & Y
            } \]
By assumption, we have a lift $g : V \to X$ together with
a homotopy $h : C_U(V) \to Y$ between $f \circ g$ and $v$.
Since $f$ has RLP with respect to $\cyli_0$, we have a lift $h' : C_U(V) \to X$ in the following diagram:
\[ \xymatrix{ V \ar[r]^g \ar[d]_{\cyli_0}         & X \ar[d]^f \\
              C_U(V) \ar[r]^-h \ar@{-->}[ur]^{h'} & Y.
            } \]
Then $h' \circ \cyli_1$ is a required lift in the original square.
\end{proof}

\begin{cor}[min-cof-gen]
Let $\C$ be an model category in which every object is fibrant.
Suppose that the class of cofibrations is generated by a set $\I$ such that
the domains and the codomains of maps in $\I$ are small relative to $\Icell$.
Then the model structure is cofibrantly generated with $\J_\I$ as a set of generating trivial cofibrations.
\end{cor}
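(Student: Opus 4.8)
The plan is to apply \rprop{model-cat} to the triple consisting of $\I$, the set $\J_\I$ — with each relative cylinder object $Cyl_U V$ obtained by applying the small object argument for $\I$ to the codiagonal $V \amalg_U V \to V$ — and the given class $\we$ of weak equivalences, and then to observe that the cofibrantly generated model structure it produces is the one we started with.

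I would first record what the hypothesis gives. Since $\I$ permits the small object argument, $(\Icell,\Iinj)$ is a weak factorization system, so $\Icell = \Icof$; together with the hypothesis that $\I$ generates the cofibrations this reads $\cof = \Icell = \Icof$, and hence $\fib \cap \we = \Iinj$. Thus (A3) holds, since $\Iinj = \fib\cap\we \subseteq \we$, and (A2) holds because $\C$ is a model category. For (A4), note that for $U\to V$ in $\I$ the map $\imath_0 : V \to Cyl_U V$ is the composite of the coproduct inclusion $V \to V\amalg_U V$ — a pushout of $U \to V$, hence a cofibration — with $[\imath_0,\imath_1] : V\amalg_U V \to Cyl_U V \in \cof$, so $\imath_0 \in \cof$; and $\imath_0$ is a section of the map $s : Cyl_U V \to V$, which lies in $\Iinj = \fib\cap\we$, so $\imath_0 \in \we$ by the 2-out-of-3 property. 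Hence $\J_\I \subseteq \cof\cap\we$, and since $\cof\cap\we$ is the left class of the weak factorization system $(\cof\cap\we,\fib)$ it is closed under pushouts and transfinite composition, so $\Jcell[_\I] \subseteq \cof\cap\we = \we\cap\Icof$.

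For (A1), $\I$ permits the small object argument by hypothesis. For $\J_\I$, the choice of cylinder objects pays off: with $Cyl_U V$ built by the small object argument we have $[\imath_0,\imath_1]\in\Icell$, so, by the factorization of $\imath_0$ above, $\imath_0\in\Icell$; as $\Icell$ is closed under pushouts and transfinite composition this gives $\Jcell[_\I]\subseteq\Icell$. The domains of the maps in $\J_\I$ are the codomains of maps in $\I$, which are small relative to $\Icell$ by hypothesis and therefore small relative to the smaller class $\Jcell[_\I]$; so $\J_\I$ permits the small object argument. This compatibility of the smallness hypothesis with $\J_\I$ is the one point that genuinely needs care, and it is what dictates the choice of cylinder objects.

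It remains to verify (A5) in the form $\Jinj[_\I]\cap\we \subseteq \Iinj$. Let $f \in \Jinj[_\I]\cap\we$. By the implication $(1\Rightarrow 2)$ of \rprop{min-we}, $f$ has the right lifting property up to $\sim_i$ with respect to every cofibration, in particular with respect to every $i\in\I$; since the domains and codomains of maps in $\I$ are small relative to $\Icell$, \rprop{triv-fib-iinj} then gives $f\in\Iinj$. Now \rprop{model-cat} applies, so $(\I,\J_\I,\we)$ is a cofibrantly generated model structure on $\C$. Its cofibrations are $\Icof = \cof$ and its weak equivalences are $\we$; since a model structure is determined by its cofibrations and weak equivalences, it coincides with the given model structure. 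Therefore the given model structure is cofibrantly generated with $\J_\I$ as a set of generating trivial cofibrations.
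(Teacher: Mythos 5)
Your proof is correct and takes essentially the same route as the paper, whose own proof is just the one-line observation that the statement follows from \rprop{triv-fib-iinj} together with \rprop{model-cat}; you have simply written out the verification of (A1)--(A5) that the paper leaves implicit, including the same smallness argument for $\J_\I$ that reappears in the proof of \rthm{main}. The extra care you take in choosing $Cyl_U V$ via the small object argument so that $\Jcell[_\I]\subseteq\Icell$ is a reasonable reading of the paper's construction and is needed for (A1), so nothing is missing.
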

\begin{proof}
This follows from the previous proposition and \rprop{model-cat}.
\end{proof}

\subsection{Weak equivalences between fibrant objects}

The following propositions give useful characterizations of trivial
cofibrations and weak equivalences between fibrant objects.

\begin{prop}[min-triv-cof]
Let $\C$ be a model category.
Any inclusion of a strong deformation retract $f$ is a weak equivalence.
If $f$ is a cofibration between fibrant objects, then the converse is true.
\end{prop}

This characterization of trivial cofibrations is probably well-known.
Similar propositions are proved in \cite{hirschhorn}, but we could not find a reference for this property, so we include a proof for the sake of convenience.

\begin{proof}
First, let us show that any inclusion of a strong deformation retract $f : X \to Y$ is a weak equivalence.
Factor $f$ into a trivial cofibration $i : X \to Z$ followed by a fibration $p : Z \to Y$.
Let $g : Y \to X$ be a retraction of $f$ and let $h : C_X(Y) \to Y$ be a homotopy between $f \circ g$ and $id_Y$.
Consider the following diagram:
\[ \xymatrix{ Y \ar[r]^{i \circ g} \ar[d]_{\cyli_0} & Z \ar[d]^p \\
              C_X(Y) \ar[r]_-h \ar@{-->}[ur]^{h'}   & Y.
            } \]
Since $\cyli_0 : Y \to C_X(Y)$ is a trivial cofibration, we have a lift $h' : C_X(Y) \to Z$.
Then $h' \circ \cyli_1 \circ f = i$ and $p \circ h' \circ \cyli_1 = id_Y$, thus $f$ is a retract of $i$, hence a weak equivalence.

To prove the converse, let us show that every weak equivalence with fibrant domain is pure with respect to cofibrations.
This follows from the fact that every weak equivalence can be factored into a trivial cofibrant and a trivial fibration.
Every trivial fibration is pure with respect to cofibrations, and every trivial cofibration with fibrant domain is a split monomorphism,
hence it is pure with respect to any map.

Now, let $f : X \to Y$ be a trivial cofibration between fibrant objects.
Since $X$ is fibrant, $f$ has a retraction $g : Y \to X$.
By 2-out-of-3 property, $g$ is a weak equivalence.
Since $Y$ is fibrant, $g$ is pure with respect to cofibrations.
Consider the following diagram:
\[ \xymatrix{ Y \amalg_X Y \ar[rr]^-{[f \circ g, id_Y]} \ar[d]_{[\cyli_0,\cyli_1]} & & Y \ar[d]^g \\
              C_X(Y) \ar[rr]_-{g \circ s} & & X
            } \]
Since $g$ is pure, we have a lift, which gives us a homotopy between $f \circ g$ and $id_Y$.
\end{proof}

Now, we need to prove a technical lemma, which we formulate in a general form since we will need it later.

\begin{lem}[we-bot]
Let $\C$ be a finitely cocomplete category, and let $i : U \to V$ and $g : Y \to Z$ be maps of $\C$.
Let $C_U(V)$ be a relative cylinder object for $i$.
Suppose that for every $A \in \{ Y, Z \}$, there exists a path object $p_0,p_1 : P(A) \to A$, which satisfy the following conditions:
\begin{enumerate}
\item For every $A \in \{ Y, Z \}$, $p_0 : P(A) \to A$ has RLP with respect to $i$ and it is pure with respect to $[\cyli_0,\cyli_1] : V \amalg_U V \to C_U(V)$.
\item There exists a morphism of path objects $(g,P(g)) : P(Y) \to P(Z)$.
\item Either $p_1 : P(Z) \to Z$ has RLP with respect to $i$ or there exists a map $s : P(Z) \to P(Z)$ such that $p_0 \circ s = p_1$ and $p_1 \circ s = p_0$.
\end{enumerate}

Let $f : X \to Y$ be a map of $\C$ such that $f$ has RLP up to $\sim^{r*}$ with respect to $U$, and $g \circ f$ has RLP with respect to $i$ up to $\sim_i$.
Then $g$ also has RLP with respect to $i$ up to $\sim_i$.
\end{lem}
\begin{proof}
Suppose that we have a commutative square as below.
Then there exists a map $u_x : U \to X$ and a sequence of homotopies $h^1, \ldots h^n : U \to P(Y)$ between $f \circ u_x$ and $u$.
\[ \xymatrix{   \ar@{}[dr]|(.7){\sim^{r*}}            & X \ar[d]^f \\
              U \ar@{-->}[ur]^{u_x} \ar[d]_i \ar[r]_u & Y \ar[d]^g \\
              V \ar[r]_v                              & Z
            } \]
Then we have a sequence of homotopies $h^1_u, \ldots h^n_u$ between $g \circ f \circ u_x$ and $v \circ i$.
If $p_1$ has RLP with respect to $i$, then let $h^j_u = P(g) \circ h^j$; otherwise let $h^j_u = s \circ s \circ P(g) \circ h^j$.
By \rlem{hom-ext-rtc}, there exists a map $v_z : V \to Z$ and a sequence of homotopies $h^1_3, \ldots h^n_3 : V \to P(Z)$ between $v_z$ and $v$ such that $h^j_3 \circ i = h^j_u$.
Indeed, if $p_1$ has RLP with respect to $i$, then we can apply \rlem{hom-ext-rtc} to $p_1,p_0 : P(Z) \to Z$.
If we defined $h^j_u$ as $s \circ s \circ P(g) \circ h^j$, then we can apply \rlem{hom-ext-rtc} to $s \circ P(g) \circ h^j$ to get a sequence of homotopies $h^1_4, \ldots h^n_4$ between $v$ and $v_z$.
Then we can define $h^j_3$ as $s \circ h^j_4$.

By assumption on $g \circ f$, there exists a map $v_x : V \to X$ and a homotopy $h_2 : C_U(V) \to Z$ between $g \circ f \circ v_x$ and $v_z$.
Note that $h^1, \ldots h^n$ is a sequence of homotopies between $f \circ u_x = f \circ v_x \circ i$ and $u$.
Thus by \rlem{hom-ext-rtc}, we have a map $v_y : V \to Y$ and a sequence of homotopies $h^1_y, \ldots h^n_y : V \to P(Y)$ between $f \circ v_x$ and $v_y$ such that $h^j_y \circ i = h^j$.
In particular, $v_y \circ i = u$.
Thus we only need to prove that $g \circ v_y$ and $v$ are homotopic relative to $i$.
If $h^j_u = P(g) \circ h^j$, then let $h^j_1 = P(g) \circ h^j_y$.
If $h^j_u = s \circ s \circ P(g) \circ h^j$, then let $h^j_1 = s \circ s \circ P(g) \circ h^j_y$.
Then $h^j_1 \circ i = h^j_u$.
Thus we have a sequence of maps $[h^j_1,h^j_3] : V \amalg_U V \to P(Z)$.

If $p_0 \circ h^j_1 \sim_i p_0 \circ h^j_3$, then $p_1 \circ h^j_1 \sim_i p_1 \circ h^j_3$.
Indeed, if $h_0$ is a relative homotopy between $p_0 \circ h^j_1$ and $p_0 \circ h^j_3$, then consider the following diagram:
\[ \xymatrix{ V \amalg_U V \ar[rr]^{[h^j_1, h^j_3]} \ar[d]_{[\cyli_0,\cyli_1]} & & P(Z) \ar[d]^{p_0} \\
              C_U(V) \ar[rr]_{h_0} & & Z
            } \]
Since $p_0$ is pure with respect to $[\cyli_0,\cyli_1]$, we have a lift $q : C_U(V) \to P(Z)$.
Then $p_1 \circ q$ is a relative homotopy between $p_1 \circ h^j_1$ $p_1 \circ h^j_3$.

Finally, note that $h_2$ is a relative homotopy between $g \circ f \circ v_x = p_0 \circ h^1_1$ and $v_z = p_0 \circ h^1_3$.
It follows that $p_1 \circ h^n_1 = g \circ v_y$ and $p_1 \circ h^n_3 = v$ are also relatively homotopic.
\end{proof}

Now we can give a characterization of weak equivalences between fibrant objects.

\begin{prop}[min-we]
Let $\C$ be a model category, and let $\I$ be a class of maps of $\C$ such that $\Icof = \cof$.
Let $f : X \to Y$ be a morphism of $\C$ such that $X$ and $Y$ are fibrant.
Then the following conditions are equivalent:
\begin{enumerate}
\item $f$ is a weak equivalence.
\item $f$ has RLP up to $\sim_i$ with respect to every cofibration $i$.
\item $f$ has RLP up to $\sim_i$ with respect to every $i \in \I$.
\end{enumerate}
\end{prop}
\begin{proof}
$(1 \Rightarrow 2)$
Factor $f$ into a trivial cofibration $i : X \to Z$ followed by a trivial fibration $p : Z \to Y$.
Let $c : U \to V$ be a cofibration and let $u : U \to X$, $v : V \to Y$ be maps such that $f \circ u = v \circ c$.
Since $c$ is a cofibration and $p$ is a trivial fibration, we have a lift $q : V \to Z$.
Since $i$ is a trivial cofibration between fibrant objects, by \rprop{min-triv-cof}, it has a retraction $r : Z \to X$ such that $i \circ r \sim_i id_Z$.
Then $r \circ q$ is a required lift.

$(2 \Rightarrow 3)$ Obvious.

$(3 \Rightarrow 1)$
Factor $f$ into a trivial cofibration $f' : X \to Z$ followed by a fibration $g : Z \to Y$.
Let us prove that $f'$ has RLP up to $\sim^r$ with respect to every object.
By \rprop{min-triv-cof}, there exists a map $g' : Z \to X$ and a homotopy $f' \circ g' \sim_{f'} id_Z$.
By \rprop{path-cyl}, we have a right homotopy $h : Z \to P(Z)$ between $f' \circ g'$ and $id_Z$.
For every morphism $u : U \to Z$ we can define a lift $u' : U \to Y$ as $g' \circ u$.
Then $h \circ u$ is a homotopy between $f' \circ u'$ and $u$.

Since $Z$ and $Y$ are fibrant, conditions of \rlem{we-bot} are satisfied.
Hence $g$ has RLP up to $\sim_i$ with respect to every $i \in \I$.
Since $\J_\I$ consists of trivial cofibrations, by \rprop{triv-fib-iinj}, $g$ is a trivial fibration.
Thus $f$ is a weak equivalence by 2-out-of-3 property.
\end{proof}

Often the class of cofibrations is generated by much smaller class $\I$.
Thus the last condition in the previous proposition gives us a useful characterization of weak equivalences between fibrant objects which is easy to verify in practice.
In particular, if every object in a model category is fibrant, then this proposition gives us a complete characterization of weak equivalences, which we will use in the next section.

\subsection{Trivial cofibrations and fibrations with fibrant codomains}

If not every object of a model category is fibrant, then sometimes we can characterize fibrant object as those which have
RLP with respect to some set of trivial cofibrations $S$ which is considerably smaller than a set of generating cofibrations.
For example, Joyal model structure on simplicial sets has weak Kan complexes as fibrant objects,
which are simplicial sets that have RLP with respect to inner horns.
But inner horns do not generate the whole class of trivial cofibrations.

A natural candidate for a set of generating trivial cofibrations is $\J_\I \cup S$.
In general, this set is too small, but it works for trivial cofibrations and fibrations with fibrant codomains.
For simplicial sets (with Joyal model structure), a stronger version of this result was proved in \cite{lurie-topos} (where it is attributed to Joyal).

\begin{prop}
Let $\C$ be a model category.
Let $\I$ be a set of generating cofibrations, and let $S$ be a set of trivial cofibrations
such that every object that has RLP with respect to $S$ is fibrant.
Suppose that the domains and the codomains of maps in $\I$ and the domains of maps in $S$ are small relative to $\Icell$.

Then a map with fibrant codomain is a trivial cofibration if and only if it belongs to $\Icof[(\J_\I \cup S)]$.
A map with fibrant codomain is a fibration if and only if it belongs to $\Iinj[(\J_\I \cup S)]$.
\end{prop}
\begin{proof}
Let $f : X \to Z$ be a trivial cofibration such that $Z$ is fibrant.
Factor $f$ into a map $g : X \to Y \in \Icell[(\J_\I \cup S)]$ followed by a map $h : Y \to Z \in \Iinj[(\J_\I \cup S)]$.
\[ \xymatrix{ X \ar[r]^g \ar[d]_f & Y \ar[d]^h \\
              Z \ar@{=}[r] \ar@{-->}[ur] & Z
            } \]
By 2-out-of-3 property, $h$ is a weak equivalence.
Since it is a weak equivalence between fibrant objects, by \rprop{min-we}, it has RLP up to $i$ with respect to every $i \in \I$.
By \rprop{triv-fib-iinj}, $f \in \Iinj$.
Hence, we have a lift $Z \to Y$ as shown in the diagram above.
Thus $f$ is a retract of $g$, and it belongs to $\Icof[(\J_\I \cup S)]$.

Let $f : X \to Y$ be a map in $\Iinj[(\J_\I \cup S)]$ such that $Y$ is fibrant.
To prove that it is a fibration, we need to show that it has RLP with respect to every trivial cofibration.
Let $i : U \to V$ be a trivial cofibration, and let $u : U \to X$ and $v : V \to Y$ be maps such that the obvious square commutes.
Factor $v$ into a map $V \to Z \in \Icell[S]$ followed by a map $Z \to Y \in \Iinj[S]$.
\[ \xymatrix{ U \ar[rr]^u \ar[d]_i & & X \ar[d]^f \\
              V \ar[r] & Z \ar[r] \ar@{-->}[ur] & Y
            } \]
Since $Z$ is fibrant and $U \to Z$ is a trivial cofibration, we have a lift $Z \to X$ in the diagram above.
Thus $f$ has RLP with respect to $i$.
\end{proof}

We can try to use the previous proposition to characterize all trivial cofibrations and fibrations.
For example, if every map in $\Iinj[(\J_\I \cup S)]$ is a pullback of a map in $\Iinj[(\J_\I \cup S)]$ with fibrant codomain,
then $\J_\I \cup S$ generates the whole class of trivial cofibrations.
But we do not know any general situation when this property holds.

\section{Existence of model structures}

In this section, we will give necessary and sufficient conditions for the existence of a model structure in which all objects are fibrant.
Throughout this section let $\C$ be a fixed complete and cocomplete category and $\I$ a set of maps of $\C$
such that the domains and the codomains of maps in $\I$ are small relative to $\Icell$.
For every $i : U \to V \in \I$, choose a reflexive relative cylinder object $C_U(V)$
such that $[\cyli_0,\cyli_1] : V \amalg_U V \to C_U(V) \in \Icof$.
Let $\J_\I = \{\ \cyli_0 : V \to C_U(V)\ |\ i : U \to V \in \I \ \}$, and
let $\we_\I$ be the set of maps which have RLP up to $\sim_i$ with respect to every $i \in \I$.

We will consider the following conditions:
\begin{align}
& \text{For every composable $f \in \Jcell[_\I]$ and $g$, if $g \circ f \in \we_\I$, then $g \in \we_\I$} \label{cond:main} \tag{*} \\
& \text{For every composable $f \in \Jcell[_\I] \cup \we_\I$ and $g$, if $g \circ f \in \we_\I$, then $g \in \we_\I$} \label{cond:strong-main} \tag{*'}
\end{align}

\begin{lem}[main]
If condition~\eqref{cond:main} holds, then the following is true:
\begin{enumerate}
\item Every weak equivalence factors into a map in $\Jcell[_\I]$ followed by a map in $\Iinj$.
\item Every weak equivalence has RLP up to $\sim_i$ with respect to every $i \in \Icof$.
\item \label{it:we-top} For every $f : X \to Y$ and $g : Y \to Z$, if $g \in \we_\I$ and $g \circ f \in \we_\I$, then $f \in \we_\I$.
\item For every $f : X \to Y$ and $g : Y \to Z$, if $f \in \we_\I$ and $g \in \we_\I$, then $g \circ f \in \we_\I$.
\item $\Jcell[_\I] \subseteq \we_\I$.
\end{enumerate}
\end{lem}
\begin{proof}
Let $f : X \to Z$ be a weak equivalence.
Factor $f$ into maps $f' : X \to Y \in \Jcell[_\I]$ and $g : Y \to Z \in \Jinj[_\I]$.
By assumption, $g \in \we_\I$.
By \rprop{triv-fib-iinj}, $g \in \Iinj$.

Maps in $\Iinj$ are pure with respect to cofibrations.
Maps in $\Jcell[_\I]$ are split monomorphisms; thus they are pure with respect to all maps.
Hence every weak equivalence is pure with respect to cofibrations.
Since every weak equivalence factors into a map in $\Jcell[_\I]$ followed by a map in $\Iinj$,
to prove that every weak equivalence has RLP up to $\sim_i$ with respect to every cofibration $i : U \to V$,
it is enough to show that every map in $\Jcell[_\I]$ has this property.
Let $f : X \to Y$ be a map in $\Jcell[_\I]$.
It has a retraction $g : Y \to X$ which is a weak equivalence by condition~\eqref{cond:main}.
Hence $g$ is pure.
Let $u : U \to X$ and $v : V \to Y$ be maps such that the obvious square commutes.
Consider the following diagram:
\[ \xymatrix{ V \amalg_U V \ar[rr]^-{[f \circ g \circ v, v]} \ar[d]_{[\cyli_0,\cyli_1]} & & Y \ar[d]^g \\
              C_U(V) \ar[rr]_-{g \circ v \circ s} & & X
            } \]
Since $g$ is pure, we have a relative homotopy between $f \circ g \circ v$ and $v$.
Thus $g \circ v$ is a required lift in the original square.

Now, let us prove \eqref{it:we-top}.
Let $f : X \to Y$ and $g : Y \to Z$ be maps such that $g \in \we_\I$ and $g \circ f \in \we_\I$.
Consider the following diagram:
\[ \xymatrix{ U \ar[r]^u \ar[d]_i & X \ar[d]^f \\
              V \ar[r]^v \ar[rd]  & Y \ar[d]^g \\
                                  & Z
            } \]
Since $g \circ f \in \we_\I$, we have a lift $q : V \to X$ and a homotopy $h : C_U(V) \to Z$ between $g \circ f \circ q$ and $g \circ v$.
Consider the following diagram:
\[ \xymatrix{ V \amalg_U V \ar[r]^-{[f \circ q, v]} \ar[d]_{[\cyli_0,\cyli_1]} & Y \ar[d]^g \\
              C_U(V) \ar[r]_-h & Z
            } \]
Since $g$ is pure with respect to cofibrations, we have a lift $h' : C_U(V) \to Y$ which gives us a homotopy between $f \circ q$ and $v$.

Now, let us show that every $f : X \to Y \in \Jcell[_\I]$ is a weak equivalence.
Let $g : Y \to X$ be a retraction of $f$.
By assumption, $g$ is a weak equivalence.
By \eqref{it:we-top}, $f$ is a weak equivalence too.

Finally, let us prove that weak equivalences are closed under compositions.
To do this, it is enough to show that relation $\sim_i$ is transitive.
Let $h_0 : C_U(V) \to X$ be a homotopy between $f : V \to X$ and $f' : V \to X$, and
let $h_1 : C_U(V) \to X$ be a homotopy between $f' : V \to X$ and $f'' : V \to X$.
Then consider the following diagram:
\[ \xymatrix{ V \ar[r]^{\cyli_1} \ar[d]_{\cyli_0} & C_U(V) \ar[d]^{p_0} \\
              C_U(V) \ar[r]_{p_1} \ar[d]_s & \po Z \ar[d]^q \\
              V \ar[r]_{\cyli_1} & \po C_U(V)
            } \]
Since $p_0 \in \Jcell[_\I]$, $q$ is a weak equivalence.
Consider the following diagram:
\[ \xymatrix{ V \amalg_U V \ar[rr]^-{[p_0 \circ \cyli_0, p_1 \circ \cyli_1]} \ar[d]_{[\cyli_0,\cyli_1]} & & Z \ar[d]^q \\
              C_U(V) \ar@{=}[rr] & & C_U(V)
            } \]
Since $q$ is pure with respect to cofibrations, we have a lift $h : C_U(V) \to Z$.
Then $[h_0,h_1] \circ h$ is a homotopy between $f$ and $f''$.
\end{proof}

\begin{thm}[main]
There exists a model structure on $\C$ with $\Icof$ as the class of cofibrations and $\Jcof[_\I]$ as the class of trivial cofibrations
if and only if condition~\eqref{cond:strong-main} holds.
\end{thm}
\begin{proof}
First, suppose that such model structure on $\C$ exists.
Then every object is fibrant and $C_U(V)$ is a correct cylinder object.
By \rprop{min-we}, $\we_\I$ is the class of weak equivalences.
Hence condition~\eqref{cond:strong-main} holds.

Now, suppose that condition~\eqref{cond:strong-main} holds.
Let us verify the conditions of \rprop{model-cat}:
\begin{description}
\item[(A1)] This holds by assumption.
\item[(A2)] The closure of $\we_\I$ under retracts is obvious.
One part of 2-out-of-3 property holds by assumption, other two parts follow from \rlem{main}.
\item[(A3)] This is true since $C_U(V)$ is reflexive.
\item[(A4)] This holds by \rlem{main}.
\item[(A5)] This holds by \rprop{triv-fib-iinj}.
\end{description}
\end{proof}

If maps in $\I$ satisfy some mild assumptions, then we can simplify the condition in \rthm{main}:
\begin{prop}[main]
Suppose that the domains of maps in $\I$ are cofibrant.
Then conditions \eqref{cond:main} and \eqref{cond:strong-main} are equivalent.
\end{prop}
\begin{proof}
Condition~\eqref{cond:strong-main} obviously implies condition~\eqref{cond:main}.
Let us prove the converse.
Let $f : X \to Y$ and $g : Y \to Z$ be maps such that $f \in \we_\I$ and $g \circ f \in \we_\I$.
By \rlem{main}, we can factor $f$ into maps $f' : X \to X' \in \Jcell[_\I]$ and $g' : X' \to Y \in \Iinj$.
Then $g \circ g' \in \we_\I$ by assumption.
Consider the following diagram:
\[ \xymatrix{ & X' \ar[d]^{g'} \\
              U \ar[r]_u \ar[d]_i \ar@{-->}[ur]^{u'} & Y \ar[d]^g \\
              V \ar[r]_v & Z
            } \]
Since $U$ is cofibrant, we have a lift $u' : U \to X'$.
Since $g \circ g' \in \we_\I$, we have a lift $v' : V \to X'$ such that $g \circ g' \circ v' \sim_i v$.
Then $g' \circ v'$ is a required lift in the original square.
\end{proof}

Thus the main problem is to verify condition~\eqref{cond:main}.
There are a few ways to do this, but the idea is the same:
we need to assume that there exists some notion of homotopy on sets of maps which satisfies some conditions.
There are two standard ways to do this: using path and cylinder objects.
Now, we present this constructions.

\begin{prop}[main-path]
Condition~\eqref{cond:main} holds if and only if for every object $X$,
there exists a path object $P(X)$ such that the following conditions hold:
\begin{enumerate}
\item For every $f : X \to Y$, there exists a morphism of path objects $(f,P(f)) : P(X) \to P(Y)$,
\item Either $p_1$ has RLP with respect to $\I$ or there exists a map $s : P(X) \to P(X)$ such that $p_0 \circ s = p_1$ and $p_1 \circ s = p_0$.
\item $p_0$ has RLP with respect to $\I$.
\item \label{it:either} Either path objects are reflexive and maps $\langle p_0, p_1 \rangle : P(X) \to X \times X$ have RLP with respect to $\J_\I$
or maps in $\Jcell[_\I]$ have RLP up to $\sim^r$ with respect to the domains of maps in $\I$.
\end{enumerate}
\end{prop}
\begin{proof}
If condition~\eqref{cond:main} holds, then we can define path objects as usual using a factorization
of the diagonal $X \to X \times X$ into maps $X \to P(X) \in \Jcell[_\I]$ and $P(X) \to X \times X \in \Jinj[_\I]$.

Note that if the second condition of \eqref{it:either} holds, then condition~\eqref{cond:main} holds by \rlem{we-bot}.
Thus we only need to prove that the first condition of \eqref{it:either} implies the second.
Indeed, let $f : X \to Y$ be a map in $\Jcell[_\I]$, and let $g : Y \to X$ be its retraction.
Consider the following diagram:
\[ \xymatrix{ X \ar[rr]^-{t \circ f} \ar[d]_f & & P(Y) \ar[d]^{\langle p_0, p_1 \rangle} \\
              Y \ar[rr]_-{\langle f \circ g, id_Y \rangle} \ar@{-->}[urr] & & Y \times Y
            } \]
We have a lift $h : Y \to P(Y)$ which gives us a right homotopy between $f \circ g$ and $id_Y$.
Now, for every $u : U \to Y$, we can define a lift $u' = g \circ u : U \to X$ and a homotopy $h \circ u$ between $f \circ u'$ and $u$.
Thus $f$ has RLP up to $\sim^r$ with respect to any object.
\end{proof}

\begin{exmp}
An example of a model category defined in this way is a folk model structure on the category of $\omega$-categories which was constructed in \cite{folk}.
The conditions of \rprop{main-path} follow from the results of \cite{folk}, but some of them are not needed for this proposition.
Thus the construction of this model structure can be somewhat simplified using the general results of this section.
\end{exmp}

Instead of path objects, we could try to use cylinder objects to verify condition~\eqref{cond:main}.
The advantage of this approach is that we do not need to define a cylinder object for every object of the category,
only for objects that are domains and codomains of generating cofibrations.
The disadvantage is that we still need to verify that maps in $\Jcell$ has RLP up to $\sim$ with respect to domains of generating cofibrations.

\begin{prop}[main-cyl]
Condition~\eqref{cond:main} holds if and only if for every object $X$ which either domain or codomain of a map in $\I$,
there exists a cylinder object $C(X)$ such that the following conditions hold:
\begin{enumerate}
\item For every $i : U \to V \in \I$, there exists a morphism of cylinder objects $(i,C(i)) : C(U) \to C(V)$.
\item There exists a map $s : C(X) \to C(X)$ such that $s \circ i_0 = i_1$, $s \circ i_1 = i_0$, and $C(i) \circ s = s \circ C(i)$.
\item These cylinder objects satisfy homotopy extension property. That is,
if $i : U \to V \in \cof$, $u : U \to X$ and $v : V \to X$ are maps, and $h : C(U) \to X$ is a homotopy between $v \circ i$ and $u$,
then there exists a map $v' : V \to X$ and a homotopy $h' : C(V) \to X$ between $v$ and $v'$ such that $h = h' \circ C(i)$.
\item Maps in $\Jcell[_\I]$ have RLP up to $\sim^*$ (reflexive transitive closure of $\sim$) with respect to domains of maps in $\I$.
\item For every $i : U \to V \in \I$, we have a lift $p$ in the following diagram:
\[ \xymatrix{ V \amalg_U V \ar[r]^-f \ar[d]_{[\cyli_0,\cyli_1]} & T \\
              C_U(V) \ar@{-->}[ur]_p
            } \]
where $T = C_U(V) \amalg_{(V \amalg_U V)} (C(V) \amalg_{C(U)} C(V))$ is the pushout of maps $[\cyli_0,\cyli_1] : V \amalg_U V \to C_U(V)$
and $\cyli_0 \amalg_{\cyli_0} \cyli_0 : V \amalg_U V \to C(V) \amalg_{C(U)} C(V)$,
and $f : V \amalg_U V \to T$ is the composite $V \amalg_U V \xrightarrow{\cyli_1 \amalg_{\cyli_1} \cyli_1} C(V) \amalg_{C(U)} C(V) \to T$.
\end{enumerate}
\end{prop}
\begin{proof}
If condition~\eqref{cond:main} holds, then we can define such cylinder objects as usual by factorization of $[id_X,id_X] : X \amalg X \to X$ into a cofibration and a trivial fibration.
The proof of the converse is similar to the proof of \rlem{we-bot}.

Suppose that we have a commutative square as below, where $f \in \Jcell[_\I]$ and $g \circ f \in \we_\I$.
Then there exists a map $u_x : U \to X$ and a sequence of homotopies $h^1, \ldots h^n : C(U) \to Y$ between $f \circ u_x$ and $u$.
\[ \xymatrix{   \ar@{}[dr]|(.7){\sim^*}               & X \ar[d]^f \\
              U \ar@{-->}[ur]^{u_x} \ar[d]_i \ar[r]_u & Y \ar[d]^g \\
              V \ar[r]_v                              & Z
            } \]
Then we have a sequence of homotopies $g \circ h^1 \circ s, \ldots g \circ h^n \circ s$ between $v \circ i$ and $g \circ f \circ u_x$.
By homotopy extension property, there exists a map $v_z : V \to Z$ and a sequence of homotopies $h^1_3, \ldots h^n_3 : C(V) \to Z$ between $v$ and $v_z$ such that $h^j_3 \circ C(i) = g \circ h^j \circ s$.

Since $g \circ f \in \we_\I$, there exists a map $v_x : V \to X$ and a homotopy $h_2 : C_U(V) \to Z$ between $g \circ f \circ v_x$ and $v_z$.
Note that $h^1 \circ s \circ s, \ldots h^n \circ s \circ s$ is a sequence of homotopies between $f \circ u_x = f \circ v_x \circ i$ and $u$.
Thus by homotopy extension property, we have a map $v_y : V \to Y$ and a sequence of homotopies $h^1_y, \ldots h^n_y : C(V) \to Y$ between $f \circ v_x$ and $v_y$ such that $h^j_y \circ C(i) = h^j \circ s \circ s$.
In particular, $v_y \circ i = u$.
Thus we only need to prove that $g \circ v_y$ and $v$ are homotopic relative to $i$.
If we let $h^j_1 = g \circ h^j_y$, then $h^j_1 \circ C(i) = g \circ h^j \circ s \circ s = h^j_3 \circ s \circ C(i)$.
Thus we have a sequence of maps $[h^j_1, h^j_3 \circ s] : C(V) \amalg_{C(U)} C(V) \to Z$.

If $[h^j_1, h^j_3 \circ s] \circ (\cyli_0 \amalg_{\cyli_0} \cyli_0) : V \amalg_U V \to Z$ extends to $C_U(V)$, then $[h^j_1, h^j_3 \circ s] \circ (\cyli_1 \amalg_{\cyli_1} \cyli_1)$ also extends to $C_U(V)$.
Indeed, if $[h^j_1, h^j_3 \circ s] \circ (\cyli_0 \amalg_{\cyli_0} \cyli_0) = h_0 \circ [\cyli_0,\cyli_1]$ for some $h_0 : C_U(V) \to Z$, then by assumption we have a map $q : T \to Z$
such that $q \circ p : C_U(V) \to Z$ is an extension of $[h^j_1, h^j_3 \circ s] \circ (\cyli_1 \amalg_{\cyli_1} \cyli_1)$.

Finally, note that $h_2$ is an extension of $[g \circ f \circ v_x, v_z] = [h^1_1, h^1_3 \circ s] \circ (\cyli_0 \amalg_{\cyli_0} \cyli_0)$.
It follows that we have an extension of $[h^n_1, h^n_3 \circ s] \circ (\cyli_1 \amalg_{\cyli_1} \cyli_1) = [g \circ v_y, v]$,
which defines a relative homotopy between $g \circ v_y$ and $v$.
\end{proof}

\begin{exmp}
An example of a model category defined in this way is the usual model structure on topological spaces.
If we define $C(X)$ as $I \times X$, then the conditions of \rprop{main-cyl} are easy to verify directly.
\end{exmp}

\bibliographystyle{amsplain}
\bibliography{ref}

\end{document}